\newtheorem{theorem}{Theorem}[section]
\newtheorem{definition}{Definition}[section]
\newtheorem{lemma}{Lemma}[section]
\newtheorem{corollary}{Corollary}[section]
\newtheorem{remark}{Remark}[section]
\newtheorem{example}{Example}[section]
\numberwithin{equation}{section}
\begin{document}
\setcounter{page}{1}

\title[Fixed points of multivalued contractions]
{Fixed points of multivalued contractions via generalized class of simulation functions}

\author[Deepesh Kumar Patel]
{Deepesh Kumar Patel}

\address{Deepesh Kumar Patel \newline
Department of Mathematics, Visvesvaraya National Institute of Technology, Nagpur-440010, India}
\email{deepesh456@gmail.com}

\subjclass[2010]{Primary 47H10; Secondary 54H25.}

\keywords{Fixed points, multivalued mapping, $\alpha$-admissibility, $C_G$-simulation functions.}

\begin{abstract}
In this paper, considering a wider class of simulation functions some fixed point results for multivalued mappings in $\alpha$-complete metric spaces have been presented. Results obtained in this paper extend and generalize some well-known fixed point results of the literature. Some examples and consequence are given to  illustrate the usability of the theory.
\end{abstract} \maketitle

\section{Introduction}
\label{Introduction}
It is well-known that the theory of multivalued mappings has applications in control theory, convex optimization, differential equations, and economics.  Nadler \cite{Nadler1969} generalized the wellknown Banach contraction principle to multivalued mappings which became a great source of inspiration for researchers working in metric fixed point theory. There have been many attempts to generalize this result in metric  and other spaces by many authors. Some notable generalizations have been obtained in \cite{AlikhaniGopalMiandaraghRezapourShahzad2013, AydiAbbasVetro2012, BerindeBerinde2007, Ciric2009, CovitzNadler1970, KlimWardowski2007,  Markin1968, MizoguchiTakahashi1989, ShuklaGopalMoreno2017, Suzuki2008JMAA}.  In a recent work, Khojasteh \textit{et al.} \cite{KhojastehShuklaRadenovic} introduced the notion of $\mathcal{Z}$-contraction using a class of control functions called simulation functions and unified several results of the literature on single-valued mappings. Olgun \textit{et al.} \cite{OlgunBicer2016} obtained some fixed point results for generalized $\mathcal{Z}$-contractions. Further, de-Hierro \textit{et al.} \cite{HierroKarapinar} enlarged the class of simulation functions for a pair of mappings and obtained some coincidence point theorems.

In \cite{SametVetroVetro2012}, Samet \textit{et al.} introduced the concept of $\alpha$-admissibility which is interesting since it does not require the contraction conditions or the contraction type conditions to hold for every pair of points in the domain unlike the BCP. It also includes the case of discontinuous mappings.

There is now massive growth in the literature dealing with fixed point problems via $\alpha$-admissible mappings (cf. \cite{Karapinar2012, KarapinarKumamSalimi2013}). In \cite{Karapinar2016}, Karapinar introduced the notion of $\alpha$-admissible $\mathcal{Z}$-contraction and generalized the results of Samet \textit{et al.} \cite{SametVetroVetro2012} and Khojasteh \textit{et al.} \cite{KhojastehShuklaRadenovic}. Recently, Radenovic and Chandok \cite{RadenovicChandok} (see also Liu \textit{et al.} \cite{LiuAnsariChandokRadenovic2018}) enlarged the class of simulation functions and generalized the results obtained in \cite{HierroKarapinar, OlgunBicer2016}.

Motivated by the results of \cite{Karapinar2016} and \cite{RadenovicChandok}, in this article we widen the class of $\alpha$-admissible mapping for multivalued mappings and define $\alpha$-admissible $\mathcal{Z}$-contractive multivalued mappings and $\alpha$-admissible generalized $\mathcal{Z}$-contractive multivalued mappings. Subsequently, we obtain some fixed point results for these mappings. Some useful examples and consequence are also presented to illustrate the usability of the obtained results.

\section{Preliminaries}
%
The aim of this section is to present some notions and results used in the paper.
For a nonempty set $X$, let $\mathcal{P}(X)$ denotes the power set of $X$. If $(X,d)$ is a metric space, then let
\begin{enumerate}
\item[] $\mathcal{N}(X) = \mathcal{P}(X)-\{\emptyset\}$,
 \item[] $\mathcal{CB}(X)=\{A\in N(X)$ : $A$ is closed and bounded$\}$,
\item[] $d(A,B)= \inf\{d(a,b): a\in A$ and $b\in B\}$,
\item[] $d(a,B)=\inf_{b\in B}d(a,b)$, $a\in X$,
\item[] $\mathcal{H}(A,B)= \max\left\lbrace\sup\limits_{a\in A}d(a,B), \sup\limits_{b\in B}d(b,A)\right\rbrace$.
\end{enumerate}

The notion of $\alpha$-admissible and triangular $\alpha$-admissible mappings were introduced by Samet \textit{et al.} \cite{SametVetroVetro2012} and Karapinar \textit{et al.} \cite{KarapinarKumamSalimi2013}, respectively as follows.
\begin{definition}\rm 
Let $\alpha : X \times X \to  [0,\infty)$. A self-mapping $T:X\to X$ is called $\alpha$-admissible if the following condition holds:
\begin{equation*}
x,y \in X,~~\alpha(x,y) \geq 1 \Longrightarrow \alpha(Tx,Ty) \geq 1.
\end{equation*}
Moreover, a self-mapping $T$ is called triangular $\alpha$-admissible if $T$ is $\alpha$-admissible and
\begin{equation*}
x,y,z \in X,~~\alpha(x,z) \geq 1~~\text{and}~~\alpha(z,y)\geq 1 \Longrightarrow \alpha(x,y) \geq 1.
\end{equation*}
\end{definition}

Further, Asl \textit{et al.} \cite{AslRezapourShahzad2012} introduced the concept of an $\alpha_*$-admissible mapping which is a multivalued version of the $\alpha$-admissible mapping provided in \cite{SametVetroVetro2012}.

\begin{definition} \cite{AslRezapourShahzad2012}  \rm
Let $X$ be a nonempty set, $T:X\to \mathcal{N}(X)$ and $\alpha : X \times X \to  [0,\infty)$ be
two mappings. We say that $T$ is $\alpha_*$-admissible if the following condition holds:
\begin{equation*}
x,y \in X,~~\alpha(x,y) \geq 1 \Longrightarrow \alpha_*(Tx,Ty) \geq 1,
\end{equation*}
where $\alpha_*(Tx,Ty):=\inf\{\alpha(a,b) ~\vert ~a\in Tx, b\in Ty\}$.
\end{definition}

On the other hand, Mohammadi \textit{et al.} \cite{MohammadiRezapourNaseer2013} extended the concept of an $\alpha_*$-admissible mapping to $\alpha$-admissible as follows.

\begin{definition} \cite{MohammadiRezapourNaseer2013} \rm
Let $X$ be a nonempty set, $T:X\to \mathcal{N}(X)$ and $\alpha : X \times X \to  [0,\infty)$ be two given mapping. Then $T$ is said to be an $\alpha$-admissible if, whenever for each $x\in X$ and $y\in Tx$ 
\begin{equation*}
\alpha(x, y) \geq 1 \Longrightarrow \alpha(y, z) \geq 1,~~\text{for all}~z\in Ty.
\end{equation*}
\end{definition}

\begin{remark} \rm
It is clear that $\alpha_*$-admissible mapping is also $\alpha$-admissible, but the converse may not be true (see Example 15 of \cite{MinakAcarAltun2013}).
\end{remark}

\begin{definition}  \cite{HussainKutbiSalimi2014} \rm
Let $(X,d)$ be a metric space and $\alpha : X \times X \to  [0,\infty)$. The metric space $(X,d)$ is said to be $\alpha$-complete if and only if every Cauchy sequence $\{x_n\}$ with $\alpha(x_n, x_{n+1})\geq 1$ for all $n \in \mathbb{N}$ converges in $X$. 
\end{definition}

\begin{remark} \cite{HussainKutbiSalimi2014} \rm
If $X$ is complete metric space, then $X$ is also $\alpha$-complete metric space. But
the converse is not true.
\end{remark}

\begin{definition} \cite{KutbiSintunavarat2015} \rm 
Let $(X,d)$ be a metric space, $\alpha : X \times X \to  [0,\infty)$ and $T: X \to \mathcal{CB}(X)$ be two given mappings. Then $T$ is said to be an $\alpha$-continuous multivalued mapping on $(\mathcal{CB}(X), \mathcal{H})$ if, for all sequences $\{x_n\}$ with $x_n \xrightarrow[]{d} x\in X$ as $n\to \infty$, and $\alpha(x_n,x_{n+1})\geq 1$ for all $n \in \mathbb{N}$, we have $Tx_n \xrightarrow[]{\mathcal{H}} Tx$ as $n \to \infty$, that is,
\begin{equation*}
\lim\limits_{n\to\infty}d(x_n,x)=0~\text{and}~\alpha(x_n,x_{n+1})\geq 1~\text{for all}~n\in \mathbb{N} ~ \Longrightarrow ~\lim\limits_{n\to\infty}\mathcal{H}(Tx_n,Tx)=0.
\end{equation*}
\end{definition}

\begin{remark}  \rm
The continuity of $T$ implies the $\alpha$-continuity of $T$, for all mappings $\alpha$. In general, the converse is not true (see in Example 2.2 \cite{KutbiSintunavarat2015}).
\end{remark}

In \cite{KhojastehShuklaRadenovic}, Khojasteh \textit{et al.} defined a new class of  contraction mapping using the following class of simulation functions.

\begin{definition}\cite{KhojastehShuklaRadenovic} \rm
A simulation function is a mapping $\zeta : [0,\infty)^2 \to \mathbb{R}$ satisfying the following conditions:
\begin{enumerate}
\item[($\zeta 1$)] $\zeta(0,0)=0$,
\item[($\zeta 2$)] $\zeta (t, s) < s-t$ for all $t, s > 0$,
\item[($\zeta 3$)] if $\{t_n\}$ and $\{s_n\}$ are sequences in $(0,\infty)$ such that $\lim\limits_{n\to\infty} t_n = \lim\limits_{n\to\infty} s_n =l \in (0,\infty) $ then
$\lim\limits_{n\to\infty}\sup\zeta (t_n, s_n) < 0.$
\end{enumerate}
\end{definition}
Argoubi \textit{et al.}\cite{ArgoubiSametVetro2015} slightly modified the definition  of simulation function  by withdrawing the condition ($\zeta 1$).
\begin{definition}\rm \label{CGSimulationfunction}
A simulation function is a mapping $\zeta : [0,\infty)^2 \to \mathbb{R}$ satisfying the following:
\begin{enumerate}
\item[(i)] $\zeta (t, s) < s-t$ for all $t, s > 0$,
\item[(ii)] if $\{t_n\}$ and $\{s_n\}$ are sequences in $(0,\infty)$ such that $\lim\limits_{n\to\infty} t_n = \lim\limits_{n\to\infty} s_n > 0$ and $t_n < s_n$, then $\lim\limits_{n\to\infty}\sup \zeta(t_n, s_n) < 0.$
\end{enumerate}
\end{definition}
Let $\mathcal{Z}$ denotes the family of all simulation functions.
For examples and related results on simulation functions, one may refer to \cite{ AbbasLatif2016, ChenTang2016, HierroKarapinar,  HieroSamet, Karapinar2016, KarapinarKhojasteh, KhojastehShuklaRadenovic, KomalKumamGopal2016, KumamGopalBudhia2016, MongkolkehaChoKumam2017, NastasiVetro2015, Samet2015, TchierVetro2016}.
\vspace{.1cm}

\begin{definition}  \cite{Karapinar2016} \label{KarapniarDef} \rm
Let $T$ be a self-mapping on a metric space $X$ endowed with metric $d$. Let $\alpha : X \times X \to  [0,\infty)$ be such that
\begin{equation}\label{karapinarcont}
\zeta(\alpha(x, y) d (Tx, Ty), d (x, y))\ge 0 , ~~\text{for all}~~ x,y\in X.
\end{equation}
Then $T$ is called an $\alpha$-admissible $\mathcal{Z}$-contraction with respect to $\zeta$, where $\zeta \in \mathcal{Z}$.
\end{definition}

Recently, in \cite{RadenovicChandok}, Radenovic and Chandok and in \cite{LiuAnsariChandokRadenovic2018}, Liu \textit{et al.} enlarged the class of simulation functions and obtained some coincidence and common fixed point results.

\begin{definition} \cite{Ansari2014} \label{Cclassfunction} \rm
A mapping $G : [0,\infty)^2 \to \mathbb{R}$ is called $C$-class function if it is
continuous and satisfies the following conditions:
\begin{enumerate}
\item[(i)]  $G(s, t) \leq s$,
\item[(ii)] $G(s, t) = s$ implies that either $s = 0$ or $t = 0$, for all $s, t \in [0,\infty)$.
\end{enumerate}
\end{definition}

\begin{definition} \cite{LiuAnsariChandokRadenovic2018, RadenovicChandok} \label{CGSimulationfunction} \rm
A $C_G$-simulation function is a mapping $\zeta : [0,\infty)^2 \to \mathbb{R}$ satisfying the
following:
\begin{enumerate}
\item[(a)] $\zeta (t, s) < G(s, t)$ for all $t, s > 0$, where $G : [0,\infty)^2 \to \mathbb{R}$  is a $C$-class function,
\item[(b)] if $\{t_n\}$ and $\{s_n\}$ are sequences in $(0,\infty)$ such that $\lim\limits_{n\to\infty} t_n = \lim\limits_{n\to\infty} s_n > 0$ and
$t_n < s_n$, then $\lim\limits_{n\to\infty}\sup \zeta(t_n, s_n) < C_G.$
\end{enumerate}
\end{definition}
\begin{definition}  \cite{LiuAnsariChandokRadenovic2018, RadenovicChandok} \label{CGproperty} \rm
A mapping $G : [0,\infty)^2 \to \mathbb{R}$ has a property $C_G$, if there exists a $C_G \geq 0$ such that
\begin{enumerate}
\item[(i)]  $G(s, t) > C_G$ implies $s > t$,
\item[(ii)] $G(t, t) \le C_G$ for all $t \in [0,\infty)$.
\end{enumerate}
\end{definition}

Let $\mathcal{Z}_G$ denotes the family of all $C_G$-simulation functions $\zeta : [0,\infty)^2 \to \mathbb{R}.$
We state the following definitions by taking $g = I$ (identity mapping) in Definitions 2.1 and 2.2 in \cite{RadenovicChandok}.
\begin{definition} \cite{RadenovicChandok}\label{ZG-contraction} \rm
Let $(X, d)$ be a metric space and $T : X \to X$ be self-mappings.
The mapping $T$ is called a $\mathcal{Z}_G$-contraction if there exists $\zeta\in \mathcal{Z}_G$ such that
\begin{equation} \label{RadnovicContrA}
\zeta(d(Tx,Ty), d(x,y))\geq C_G
\end{equation}
for all $x, y \in X$ with $x \ne y.$
\end{definition}
If  $C_G = 0$, then we get $\mathcal{Z}$-contraction defined in \cite{KhojastehShuklaRadenovic}.

\begin{definition} \cite{RadenovicChandok}\label{GeneralizedZG-contraction} \rm
Let $(X, d)$ be a metric space and $T : X \to X$ be self-mappings.
The mapping $T$ is called a generalized $\mathcal{Z}_G$-contraction if there exists $\zeta\in \mathcal{Z}_G$ such that
\begin{equation} \label{GeneralizedZG-contraction}
\zeta\left(d(Tx,Ty), \max\left\lbrace d(x,y), d(x,Tx), d(y,Ty), \frac{d(x,Ty)+d(y,Tx)}{2} \right\rbrace\right)\geq C_G
\end{equation}
for all $x, y \in X$ with $x \ne y.$
\end{definition}
If $C_G = 0$, then we get $\mathcal{Z}$-contraction defined in \cite{OlgunBicer2016}.

\begin{lemma}  \cite{RadenovicChandok} \label{LemmaA} \rm
Let $(X, d)$ be a metric space and $\{x_n\}$ be a sequence in $X$
 such that $\lim\limits_{n\to \infty}d(x_n,x_{n+1})=0.$ If $\{x_n\}$ is not Cauchy then there exists $\varepsilon>0$ and two subsequences $\{x_{m(k)}\}$ and $\{x_{n(k)}\}$ of $\{x_n\}$ where $n(k)>m(k)>k$ such that
 \begin{eqnarray*}
 &&\lim\limits_{k\to\infty} d(x_{m(k)},x_{n(k)})=\lim\limits_{k\to\infty} d(x_{m(k)},x_{n(k)+1})=\lim\limits_{k\to\infty} d(x_{m(k)-1},x_{n(k)})=\varepsilon\\
&& ~\text{and}~\lim\limits_{k\to\infty} d(x_{m(k)-1},x_{n(k)+1})=\lim\limits_{k\to\infty} d(x_{m(k)+1},x_{n(k)+1})=\varepsilon.
 \end{eqnarray*}
\end{lemma}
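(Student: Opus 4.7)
The plan is to extract a positive lower bound $\varepsilon$ measuring the failure of $\{x_n\}$ to be Cauchy, then choose the indices $n(k)$ minimally so as to obtain a matching upper bound $\varepsilon + o(1)$, and finally propagate the resulting limit $\varepsilon$ to the remaining four quantities using only the triangle inequality and the hypothesis $d(x_n,x_{n+1})\to 0$.

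First, since $\{x_n\}$ is not Cauchy, negating the Cauchy condition yields an $\varepsilon>0$ such that for every $k\in\mathbb{N}$ one can find indices $n>m>k$ with $d(x_m,x_n)\geq \varepsilon$. For each $k$ I would pick $m(k)>k$ arbitrarily and then let $n(k)$ be the \emph{smallest} integer strictly greater than $m(k)$ for which $d(x_{m(k)},x_{n(k)})\geq \varepsilon$; minimality gives $d(x_{m(k)},x_{n(k)-1})<\varepsilon$. The triangle inequality then yields
\[
\varepsilon \;\leq\; d(x_{m(k)},x_{n(k)}) \;\leq\; d(x_{m(k)},x_{n(k)-1})+d(x_{n(k)-1},x_{n(k)}) \;<\; \varepsilon + d(x_{n(k)-1},x_{n(k)}),
\]
and letting $k\to\infty$, the squeeze with $d(x_{n(k)-1},x_{n(k)})\to 0$ produces $\lim_{k\to\infty} d(x_{m(k)},x_{n(k)}) = \varepsilon$.

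For each of the other four limits, I would repeatedly use the two-sided estimate $|d(x_a,x_b)-d(x_a,x_c)|\leq d(x_b,x_c)$ to compare the target quantity against $d(x_{m(k)},x_{n(k)})$ up to an error which is a finite sum of consecutive-term distances $d(x_j,x_{j+1})$. Concretely, $|d(x_{m(k)},x_{n(k)+1})-d(x_{m(k)},x_{n(k)})|\leq d(x_{n(k)},x_{n(k)+1})$; $|d(x_{m(k)-1},x_{n(k)})-d(x_{m(k)},x_{n(k)})|\leq d(x_{m(k)-1},x_{m(k)})$; and the two remaining quantities, $d(x_{m(k)-1},x_{n(k)+1})$ and $d(x_{m(k)+1},x_{n(k)+1})$, are handled by applying two such comparisons in succession (perturbing one index at a time). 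Since every perturbation adds an error of the form $d(x_j,x_{j+1})\to 0$, each of the four expressions has the same limit $\varepsilon$ as $d(x_{m(k)},x_{n(k)})$.

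No step is really an obstacle: the whole argument is a careful bookkeeping of triangle inequalities. The only delicate point is insisting that $n(k)$ be the \emph{minimal} index past $m(k)$ meeting $d(x_{m(k)},x_{n(k)})\geq \varepsilon$, since this is exactly what converts the one-sided lower bound $\varepsilon$ into the two-sided estimate needed to identify the limit, rather than merely bounding it below.
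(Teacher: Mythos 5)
The paper itself offers no proof of this lemma: it is imported verbatim from \cite{RadenovicChandok}, so there is no in-paper argument to compare against. Your proposal is the standard argument and is essentially sound: extract $\varepsilon>0$ from the negation of the Cauchy property, choose $n(k)$ minimally to convert the lower bound into the two-sided estimate $\varepsilon \le d(x_{m(k)},x_{n(k)}) < \varepsilon + d(x_{n(k)-1},x_{n(k)})$, and then transfer the limit to the remaining four quantities by perturbing one index at a time via $|d(x_a,x_b)-d(x_a,x_c)|\le d(x_b,x_c)$ together with $d(x_n,x_{n+1})\to 0$. All of that bookkeeping is correct, including the observation that $n(k)-1\ge m(k)>k$, so the error terms indeed tend to $0$.

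One step, as literally written, would fail and needs repair: you say you would pick $m(k)>k$ \emph{arbitrarily} and then take $n(k)$ to be the smallest index beyond $m(k)$ with $d(x_{m(k)},x_{n(k)})\ge\varepsilon$. For an arbitrary $m(k)$ the set $\{n>m(k): d(x_{m(k)},x_n)\ge\varepsilon\}$ may be empty, so this minimum need not exist; the negation of the Cauchy condition only guarantees that \emph{some} pair $(m,n)$ with $n>m>k$ violates the $\varepsilon$-bound, not that every index $m>k$ occurs as the first coordinate of such a pair. The fix is immediate: let $m(k)$ be the first coordinate of a violating pair supplied by non-Cauchyness, after which the set above is nonempty and your minimal choice of $n(k)$ is well defined (and the minimality bound $d(x_{m(k)},x_{n(k)-1})<\varepsilon$ holds, trivially so when $n(k)=m(k)+1$). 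With that adjustment the rest of your argument goes through exactly as you describe.
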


\section{Main results}
\label{Main results}
\begin{definition}  \rm
Let $X$ be a nonempty set, $T:X\to \mathcal{N}(X)$ and $\alpha : X \times X \to  [0,\infty)$ be two mappings. Then $T$ is said to be triangular $\alpha_*$-admissible if $T$ is $\alpha_*$-admissible and
\begin{equation*}
\alpha(x,y) \geq 1 ~~\text{and}~~ \alpha_*(Tx, Ty) \geq 1 \Longrightarrow \alpha(x,z)\geq 1,~~~\forall~ z\in Ty.
\end{equation*}
\end{definition}

\begin{definition}  \rm
Let $X$ be a nonempty set, $T:X\to \mathcal{N}(X)$ and $\alpha : X \times X \to  [0,\infty)$ be two mappings. Then $T$ is said to be triangular $\alpha$-admissible if $T$ is $\alpha$-admissible and
\begin{equation*}
\alpha(x,y) \geq 1 ~~\text{and}~~ \alpha(y, z) \geq 1 \Longrightarrow \alpha(x,z)\geq 1,~~~\forall~ z\in Ty.
\end{equation*}
\end{definition}
A triangular $\alpha_*$-admissible mapping is also triangular $\alpha$-admissible, but the converse may not be true.

\begin{lemma}  \label{Lemma3a}
Let $T:X\to \mathcal{N}(X)$ be a triangular $\alpha$-admissible mapping. Assume that there exist $x_0 \in X$ and $x_1\in Tx_0$ such that $\alpha(x_0,x_1)\geq 1$. Then for a sequence $\{x_n\}$ such that $x_{n+1}\in Tx_n$, we have $\alpha(x_n,x_m)\geq 1$ for all $m,n \in \mathbb{N}$ with $n<m$.
\end{lemma}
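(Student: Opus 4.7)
The plan is to proceed in two stages, first establishing consecutive-index information and then bootstrapping it to arbitrary pairs using the triangular property.

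\textbf{Stage 1 (consecutive indices).} The first task is to show that $\alpha(x_n, x_{n+1}) \geq 1$ for every $n \geq 0$, by induction on $n$. The base case $n = 0$ is the hypothesis. For the inductive step, since $x_{n+1} \in Tx_n$ and by hypothesis $\alpha(x_n, x_{n+1}) \geq 1$, the $\alpha$-admissibility of $T$ in the sense of Mohammadi \textit{et al.} yields $\alpha(x_{n+1}, z) \geq 1$ for every $z \in Tx_{n+1}$; choosing $z = x_{n+2} \in Tx_{n+1}$ delivers the desired inequality at level $n+1$.

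\textbf{Stage 2 (non-consecutive indices).} The idea is to fix $n$ and run an induction on $m$ over the range $m > n$. The base case $m = n+1$ is exactly what Stage 1 provides. For the inductive step, suppose $\alpha(x_n, x_m) \geq 1$ for some $m > n$. From Stage 1 we also have $\alpha(x_m, x_{m+1}) \geq 1$, and crucially $x_{m+1} \in Tx_m$. Now invoke the triangular $\alpha$-admissibility of $T$ with the roles $x \leftarrow x_n$, $y \leftarrow x_m$, $z \leftarrow x_{m+1}$: since $x_{m+1}\in Tx_m$, the hypotheses $\alpha(x_n, x_m)\geq 1$ and $\alpha(x_m, x_{m+1})\geq 1$ force $\alpha(x_n, x_{m+1}) \geq 1$, closing the induction.

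The only subtlety to keep track of is that the triangular $\alpha$-admissibility condition stated in Definition~3.2 concludes $\alpha(x,z)\geq 1$ only for $z$ in the forward image $Ty$, rather than for all $z$ in $X$. One must therefore verify at each step of Stage 2 that the third point $x_{m+1}$ lies in $Tx_m$; this is built in by the sequence construction $x_{k+1}\in Tx_k$, so the condition is met automatically, and no real obstacle arises. The proof is then a clean double induction, with Stage 1 feeding the hypothesis $\alpha(x_m,x_{m+1})\geq 1$ into the inductive step of Stage 2.
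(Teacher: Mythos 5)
Your proof is correct and follows essentially the same route as the paper: first an induction using $\alpha$-admissibility to get $\alpha(x_n,x_{n+1})\geq 1$ for all $n$, then a second induction on $m$ using the triangular condition with $z=x_{m+1}\in Tx_m$ to extend to $\alpha(x_n,x_m)\geq 1$. Your explicit check that the third point lies in the forward image $Tx_m$ (so the triangular hypothesis genuinely applies) is a welcome point of care that the paper leaves implicit.
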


\begin{proof}
Since there exist $x_0 \in X$ and $x_1 \in Tx_0$ such that $\alpha(x_0,x_1)\geq 1$, then by the $\alpha$-admissibility of $T$, we have $\alpha(x_1,x_2)\geq 1$. By continuing this process, we get $\alpha(x_n,x_{n+1})\geq 1$ for all $n\in \mathbb{N}\cup \{0\}$. Suppose that $n<m$. Since $\alpha(x_n,x_{n+1})\geq 1$ and $\alpha(x_{n+1},x_{n+2})\geq 1$, then using the triangular $\alpha$-admissibility  of $T$ we have $\alpha(x_n, x_{n+2})\geq 1$. Again, since $\alpha(x_n, x_{n+2})\geq 1$ and $\alpha(x_{n+2}, x_{n+3})\geq 1$, then we deduce $\alpha(x_n, x_{n+3})\geq 1$. By continuing this process, we get $\alpha(x_n, x_m)\geq 1$.
\end{proof}

\begin{definition}\rm
Let $(X, d)$ be a metric space and $T : X \to \mathcal{CB}(X)$. The mapping $T$ is said to be a  $\alpha$-admissible $\mathcal{Z}_G$-contractive multivalued mapping if there exists $\zeta\in \mathcal{Z}_G$ and $\alpha : X \times X \to  [0,\infty)$ such that
\begin{equation}\label{alphaAdmZGContraction}
 \zeta\big(\alpha(x, y) \mathcal{H}(Tx, Ty), d(x, y)\big)\geq C_G
\end{equation}
for all $x, y \in X$ with $x \neq y.$
\end{definition}

\begin{definition}\rm \label{AlphaAdmGeneralizedZG-contraction}
Let $(X, d)$ be a metric space and $T : X \to \mathcal{CB}(X)$. We say $T$ is $\alpha$-admissible generalized $\mathcal{Z}_G$-contractive multivalued mapping if there exists $\zeta\in \mathcal{Z}_G$ and $\alpha : X \times X \to  [0,\infty)$ such that
\begin{equation}\label{AlphaAdmGeneralizedZG-contraction}
 \zeta(\alpha(x, y) \mathcal{H}(Tx, Ty), M(x, y))\ge C_G
\end{equation}
for all $x, y \in X$ with $x \neq y$, where
\begin{equation*}
M(x,y)=\max\left\lbrace d(x,y), d(x,Tx), d(y,Ty), \frac{d(x,Ty)+d(y,Tx)}{2} \right\rbrace.
\end{equation*}
\end{definition}
The following is the first main result in this paper.
\begin{theorem} \label{TheoremAlpha1}	
Let $(X, d)$ be a metric space, $T : X \to \mathcal{CB}(X)$ be a  $\alpha$-admissible generalized $\mathcal{Z}_G$-contractive multivalued mapping. Suppose the following conditions hold:
\begin{enumerate}
\item[(i)] $(X, d)$ is an $\alpha$-complete metric space,
\item[(ii)] there exist $x_0 \in X$ and $x_1\in Tx_0$ such that $\alpha(x_0,x_1) \geq 1$,
\item[(iii)] $T$ is a triangular $\alpha$-admissible,
\item[(iv)] $T$ is an $\alpha$-continuous multivalued mapping.
\end{enumerate}
Then $T$ has a fixed point.
\end{theorem}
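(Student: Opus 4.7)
The plan is to follow the familiar three-stage strategy for fixed-point theorems built on simulation-type functions, adapted to the multivalued framework, with the $\alpha$-continuity hypothesis delivering the fixed point at the end.

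\textbf{Stage 1 (Picard-like iteration).} Starting from $x_0$ and $x_1\in Tx_0$ as in (ii), I would inductively construct $\{x_n\}$ with $x_{n+1}\in Tx_n$ along which $d_n:=d(x_n,x_{n+1})$ decreases strictly. Applying \eqref{AlphaAdmGeneralizedZG-contraction} with $x=x_{n-1}$, $y=x_n$ and exploiting $x_n\in Tx_{n-1}$ (so $d(x_n,Tx_{n-1})=0$, $d(x_{n-1},Tx_{n-1})\le d_{n-1}$, and $d(x_n,Tx_n)\le \mathcal{H}(Tx_{n-1},Tx_n)$), a routine check combined with axiom (a) of the $C_G$-simulation function and Definition~\ref{CGproperty}(i) reduces $M(x_{n-1},x_n)$ to $d(x_{n-1},x_n)$ and yields
$$d(x_n,Tx_n)\le \mathcal{H}(Tx_{n-1},Tx_n)<d(x_{n-1},x_n).$$
Hence I can select $x_{n+1}\in Tx_n$ with $d_n<d_{n-1}$ and simultaneously $d_n\le d(x_n,Tx_n)+1/n$. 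By (iii) and Lemma~\ref{Lemma3a}, $\alpha(x_m,x_n)\ge 1$ for all $m<n$. (If $x_{n-1}=x_n$ or $x_n\in Tx_n$ at any stage, we are done.)

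\textbf{Stage 2 (successive distances vanish).} As $\{d_n\}$ decreases, $d_n\to L\ge 0$. Supposing $L>0$, the sandwich $d_n-1/n\le d(x_n,Tx_n)\le \mathcal{H}(Tx_{n-1},Tx_n)<d_{n-1}$ forces $\mathcal{H}(Tx_{n-1},Tx_n)\to L$. Setting $t_n:=\alpha(x_{n-1},x_n)\mathcal{H}(Tx_{n-1},Tx_n)$ and $s_n:=d_{n-1}$, both tend to $L>0$ with $t_n<s_n$, and axiom (b) of the $C_G$-simulation function gives $\limsup_n \zeta(t_n,s_n)<C_G$, contradicting $\zeta(t_n,s_n)\ge C_G$ from the contraction. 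Hence $L=0$.

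\textbf{Stage 3 (Cauchy property).} Arguing by contradiction, if $\{x_n\}$ fails to be Cauchy, Lemma~\ref{LemmaA} supplies $\varepsilon>0$ and subsequences $\{x_{m(k)}\},\{x_{n(k)}\}$ with the stated limits, and Lemma~\ref{Lemma3a} ensures $\alpha(x_{m(k)},x_{n(k)})\ge 1$. Using the Hausdorff bound $d(x,Ty)\le d(x,Tx)+\mathcal{H}(Tx,Ty)$ (an instance of $\inf(f+g)\le \inf f+\sup g$) together with $d_n\to 0$, one verifies $M(x_{m(k)},x_{n(k)})\to\varepsilon$. I expect the main technical obstacle here: showing $t_k:=\alpha(x_{m(k)},x_{n(k)})\mathcal{H}(Tx_{m(k)},Tx_{n(k)})\to\varepsilon$. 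The upper bound follows from $t_k<M(x_{m(k)},x_{n(k)})\to\varepsilon$; the lower bound is delicate because the Hausdorff distance between sets is not directly controlled by the distance of two specific iterates, so I would push it through by combining the triangle inequality $d(x_{m(k)},x_{n(k)})\le d_{m(k)}+d(x_{m(k)+1},x_{n(k)+1})+d_{n(k)}$ with the Hausdorff estimates above and the limits of Lemma~\ref{LemmaA} to squeeze $\liminf_k \mathcal{H}(Tx_{m(k)},Tx_{n(k)})\ge\varepsilon$. Once $t_k\to\varepsilon$, axiom (b) again produces $\limsup_k\zeta(t_k,s_k)<C_G$, the required contradiction.

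\textbf{Conclusion.} Cauchyness together with $\alpha(x_n,x_{n+1})\ge 1$ and (i) gives $x_n\to x^*\in X$. Then (iv) yields $\mathcal{H}(Tx_n,Tx^*)\to 0$, and from $x_{n+1}\in Tx_n$,
$$d(x^*,Tx^*)\le d(x^*,x_{n+1})+\mathcal{H}(Tx_n,Tx^*)\longrightarrow 0.$$
Since $Tx^*$ is closed, $x^*\in Tx^*$, completing the proof.
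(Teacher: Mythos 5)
Your proposal follows the same architecture as the paper's own proof: build the orbit $x_{n+1}\in Tx_n$, use the contraction condition together with property (i) of Definition \ref{CGproperty} to collapse $M(x_n,x_{n+1})$ to $d(x_n,x_{n+1})$ and obtain strict decrease, invoke axiom (b) of the $C_G$-simulation function to force $d(x_n,x_{n+1})\to 0$, prove the Cauchy property by contradiction via Lemma \ref{LemmaA} and Lemma \ref{Lemma3a}, and conclude with $\alpha$-completeness and $\alpha$-continuity. In Stage~1 you are in fact more careful than the paper: the paper chooses $x_2\in Tx_1$ with $d(x_1,x_2)\le\mathcal{H}(Tx_0,Tx_1)$, which tacitly assumes the infimum $d(x_1,Tx_1)$ is attained in the closed (but not necessarily compact) set $Tx_1$; your $+1/n$ slack avoids this and still produces the squeeze $\mathcal{H}(Tx_{n-1},Tx_n)\to L$ needed in Stage~2.

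The genuine problem is the lower bound in Stage~3, and your proposed repair does not close it. To apply axiom (b) you need $\liminf_k\mathcal{H}(Tx_{m(k)},Tx_{n(k)})\ge\varepsilon$. The triangle inequality you cite gives $\liminf_k d(x_{m(k)+1},x_{n(k)+1})\ge\varepsilon$, but there is no inequality of the form $d(a,b)\le\mathcal{H}(A,B)$ for arbitrary $a\in A$, $b\in B$: the Hausdorff metric only controls $d(a,B)$, and $d(x_{m(k)+1},Tx_{n(k)})$ can be strictly smaller than $d(x_{m(k)+1},x_{n(k)+1})$ because $Tx_{n(k)}$ may contain points far from $x_{n(k)+1}$ and close to $x_{m(k)+1}$. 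Likewise your estimate $d(x,Ty)\le d(x,Tx)+\mathcal{H}(Tx,Ty)$ only yields $\mathcal{H}(Tx_{m(k)},Tx_{n(k)})\ge d(x_{m(k)},Tx_{n(k)})-d(x_{m(k)},x_{m(k)+1})$, and nothing in the hypotheses bounds the infimum $d(x_{m(k)},Tx_{n(k)})$ from below by a quantity tending to $\varepsilon$; the displayed estimates do not even rule out $\mathcal{H}(Tx_{m(k)},Tx_{n(k)})$ being small while $d(x_{m(k)},x_{n(k)})=\varepsilon$, e.g.\ when the two image sets nearly coincide as sets. You should be aware that the paper's proof is no better at this point: in \eqref{Alpha-Eq-4} it simply asserts $d(x_{m(k)+1},x_{n(k)+1})\le\alpha(x_{m(k)},x_{n(k)})\mathcal{H}(Tx_{m(k)},Tx_{n(k)})$, which is exactly the unjustified pointwise-versus-Hausdorff inequality. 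So you have correctly located the one delicate step, but neither your sketch nor the paper supplies a valid argument for it; repairing it would require extra structure (for instance compact values, or a Feng--Liu type condition formulated with $d(y,Ty)$ instead of $\mathcal{H}(Tx,Ty)$) or a different route to the Cauchy property.
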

\begin{proof}
From condition (ii), we have $x_0 \in X$ and $x_1\in Tx_0$ such that $\alpha(x_0,x_1) \geq 1$. If $x_0 = x_1$ or $x_1 \in Tx_1$, then $x_1$ is a fixed point of $T$ and we are done. Assume that $x_1 \not\in Tx_1$. Now, since $T$ is a mapping from $X$ to $\mathcal{CB}(X)$,  so we can choose a $x_2 \in Tx_1$ such that
$$d(x_1,x_2)\leq \mathcal{H}(Tx_0,Tx_1).$$
Again we can choose a point $x_3 \in Tx_2$ such that
$$d(x_2,x_3) \leq \mathcal{H}(Tx_1,Tx_2).$$
Thus, we obtain a sequence $\{x_n\}$ in $X$ such that $x_{n+1}\in Tx_n$, $x_n \notin Tx_n$ and
\begin{equation} \label{Alpha-Eq-0}
d(x_{n+1},x_{n+2}) \leq \mathcal{H}(Tx_n,Tx_{n+1}),
\end{equation}
for all $n\in \mathbb{N}$. Since $x_2 \in Tx_1$, $x_3\in Tx_2$ and $T$ is $\alpha$-admissible, we have
\begin{equation*}
\alpha(x_1, x_2) \geq 1 \Longrightarrow \alpha(x_2, x_3) \geq 1.
\end{equation*}
Recursively, we obtain,
\begin{equation}\label{Alpha-Eq-1}
\alpha(x_{n},x_{n+1})\geq 1  \text{ for all } n \in \mathbb{N}\cup\{0\}.
\end{equation}
From \eqref{AlphaAdmGeneralizedZG-contraction},
\begin{align*}
C_G &\leq \zeta \Big(\alpha(x_{n},x_{n+1}) \mathcal{H}(Tx_{n}, Tx_{n+1}), M(x_{n}, x_{n+1})\Big) \\
&< G\Big(M(x_{n}, x_{n+1}), \alpha(x_{n},x_{n+1})\mathcal{H}(Tx_{n}, Tx_{n+1})\Big).
\end{align*}
Further, using (i) of Definition \ref{CGproperty}, we have
\begin{equation}\label{Alpha-Eq-2}
\mathcal{H}(Tx_{n}, Tx_{n+1})\leq\alpha(x_{n},x_{n+1})\mathcal{H}(Tx_{n}, Tx_{n+1}) < M(x_{n}, x_{n+1}),
\end{equation}
where
\begin{align*}
M(x_{n}, x_{n+1})&= \max\Big \lbrace d(x_{n}, x_{n+1}),d(x_{n}, Tx_{n}),d(x_{n+1}, Tx_{n+1}),\\
&~~~~~~~~~~~~~~\frac{d(x_{n}, Tx_{n+1})+d(x_{n+1}, Tx_{n})}{2} \Big \rbrace \\
&=\max \Big \lbrace d(x_n,x_{n+1}), d(x_{n+1}, Tx_{n+1})  \Big \rbrace.
\end{align*}
If $M(x_{n}, x_{n+1})=d(x_{n+1}, Tx_{n+1})$, then \eqref{Alpha-Eq-2} gives
$$\mathcal{H}(Tx_{n}, Tx_{n+1})\leq  d(x_{n+1}, Tx_{n+1}),$$
a contradiction.  Hence $M(x_{n}, x_{n+1}) = d(x_n,x_{n+1})$, and consequently from \eqref{Alpha-Eq-2}, we have
\begin{equation} \label{Alpha-Eq-2A}
d(x_{n+1},x_{n+2})\leq \mathcal{H}(Tx_{n}, Tx_{n+1}) < M(x_{n}, x_{n+1}) =d(x_n,x_{n+1}).
\end{equation}
Hence for all $n \in \mathbb{N}\cup\{0\}$,  we have  $d (x_n, x_{n+1}) > d (x_{n+1}, x_{n+2})$. So $\{d(x_n, x_{n+1})\}$ is a decreasing sequence of nonnegative real numbers, and hence there exists $L\geq 0$ such that $\lim\limits_{n\to \infty}d (x_n, x_{n+1})=\lim\limits_{n\to \infty}M(x_{n}, x_{n+1})=L$.

Assume that $L>0$. Since $\alpha(x_{n},x_{n+1})\mathcal{H}(Tx_{n}, Tx_{n+1}) < M(x_{n}, x_{n+1})$, so we get
\begin{equation}\label{Alpha-Eq-3}
\lim\limits_{n\to\infty}\alpha(x_{n},x_{n+1})\mathcal{H}(Tx_{n}, Tx_{n+1})=L.
\end{equation}
Then using \eqref{AlphaAdmGeneralizedZG-contraction} and (b) of Definition \ref{CGSimulationfunction}, we get
\begin{align*}
C_G &\leq \lim\limits_{n\to\infty}\sup \zeta\Big(\alpha(x_{n},x_{n+1})\mathcal{H}(Tx_{n}, Tx_{n+1}), M(x_{n}, x_{n+1})\Big)\\
&=\lim\limits_{n\to\infty}\sup \zeta\Big(\alpha(x_{n},x_{n+1})\mathcal{H}(Tx_{n}, Tx_{n+1}), d(x_{n}, x_{n+1})\Big) < C_G,
\end{align*}
which is a contradiction and hence $L=0$.

Now we show that $\{x_n\}$ is a Cauchy sequence. If not, then by Lemma \ref{LemmaA}  we have
\begin{equation}\label{Alpha-Eq-4-i}
\lim\limits_{k\to\infty} d(x_{m(k)},x_{n(k)})=\lim\limits_{k\to\infty} d(x_{m(k)+1},x_{n(k)+1})=\varepsilon
\end{equation}
and consequently,
\begin{equation}\label{Alpha-Eq-4-ii}
\lim\limits_{k\to\infty}M(x_{m(k)}, x_{n(k)})=\varepsilon.
\end{equation}
Let $x=x_{m(k)}, y=x_{n(k)}$.
Since $T$ is triangular $\alpha$-orbital admissible, so by Lemma \ref{Lemma3a}, we have
$\alpha(x_{m(k)},x_{n(k)}) \geq 1$.
Then by \eqref{AlphaAdmGeneralizedZG-contraction},
\begin{align*}
C_G & \leq \zeta \Big( \alpha(x_{m(k)},x_{n(k)}) \mathcal{H}(Tx_{m(k)},Tx_{n(k)}), M(x_{m(k)},x_{n(k)} )  \Big)\\
& < G \Big( M(x_{m(k)},x_{n(k)}), \alpha(x_{m(k)},x_{n(k)}) \mathcal{H}(Tx_{m(k)},Tx_{n(k)})  \Big).
\end{align*}
Here $M(x_{m(k)},x_{n(k)})=d(x_{m(k)},x_{n(k)})$, so further by (i) of Definition \ref{CGproperty}, we get
\begin{equation}\label{Alpha-Eq-4}
\begin{split}
d(x_{m(k)+1},x_{n(k)+1}) \leq \alpha(x_{m(k)},x_{n(k)}) \mathcal{H}(Tx_{m(k)},Tx_{n(k)})\\ < M(x_{m(k)},x_{n(k)}) = d(x_{m(k)},x_{n(k)}).
\end{split}
\end{equation}
Using \eqref{Alpha-Eq-4-i} and \eqref{Alpha-Eq-4-ii} in \eqref{Alpha-Eq-4}, we get
\begin{equation*}
\lim\limits_{k\to\infty}\alpha(x_{m(k)},x_{n(k)}) \mathcal{H}(Tx_{m(k)},Tx_{n(k)})=\varepsilon.
\end{equation*}
Therefore using \eqref{AlphaAdmGeneralizedZG-contraction} and (b) of Definition \ref{CGSimulationfunction}, we get
$$C_G \leq \lim\limits_{n\to\infty}\sup \zeta\Big(\alpha(x_{m(k)},x_{n(k)}) \mathcal{H}(Tx_{m(k)},Tx_{n(k)}), M(x_{m(k)}, x_{n(k)})\Big) < C_G,$$ which is a contradiction. Hence $\{x_n\}$ is a Cauchy sequence.
From \eqref{Alpha-Eq-1} and the $\alpha$-completeness of $(X, d)$, there exists $u \in X$ such that $x_n \xrightarrow[]{d} u$ as $n \to \infty$.

By $\alpha$-continuity of the multivalued mapping $T$, we get
\begin{equation}\label{Alpha-Eq-4a}
\lim\limits_{n\to\infty}\mathcal{H}(Tx_n,Tx)=0.
\end{equation}
Thus we obtain
\begin{equation*}
d(u,Tu)= \lim\limits_{n\to \infty}d(x_{n+1}, Tu) \leq \lim\limits_{n\to \infty}\mathcal{H}(Tx_{n}, Tu) =0.
\end{equation*}
Therefore, $u \in Tu$ and hence $T$ has a fixed point.
\end{proof}

\begin{theorem} \label{TheoremAlpha2}	
Let $(X, d)$ be a metric space and $T : X \to \mathcal{CB}(X)$ be an $\alpha$-admissible $\mathcal{Z}_G$-contractive multivalued mapping. Suppose following conditions hold:
\begin{enumerate}
\item[(i)] $(X, d)$ is an $\alpha$-complete metric space,
\item[(ii)] there exist $x_0 \in X$ and $x_1\in Tx_0$ such that $\alpha(x_0,x_1) \geq 1$,
\item[(iii)] $T$ is a triangular $\alpha$-admissible,
\item[(iv)] $T$ is an $\alpha$-continuous multivalued mapping.
\end{enumerate}
Then $T$ has a fixed point.
\end{theorem}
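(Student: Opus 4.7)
The proof will closely follow the scheme of Theorem \ref{TheoremAlpha1}, with simplifications afforded by the absence of the auxiliary quantity $M(x,y)$ (the right coordinate of $\zeta$ is simply $d(x,y)$). The plan is to build a Picard-type iteration and verify the Cauchy property through the $C_G$-simulation inequality, then close with $\alpha$-continuity at the limit.

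First, I would bootstrap a sequence $\{x_n\}$ starting from the pair $(x_0,x_1)$ of condition (ii): if $x_n\in Tx_n$ for some $n$ we are done, otherwise since $T$ takes values in $\mathcal{CB}(X)$ we can select $x_{n+1}\in Tx_n$ with $d(x_n,x_{n+1})\le \mathcal{H}(Tx_{n-1},Tx_n)$. The $\alpha$-admissibility of $T$ then propagates $\alpha(x_0,x_1)\ge 1$ to $\alpha(x_n,x_{n+1})\ge 1$ for every $n$. Applying the defining inequality \eqref{alphaAdmZGContraction} with $x=x_n$, $y=x_{n+1}$ and invoking condition (a) of Definition \ref{CGSimulationfunction} together with property (i) of Definition \ref{CGproperty}, I get
\[
d(x_{n+1},x_{n+2})\le \alpha(x_n,x_{n+1})\mathcal{H}(Tx_n,Tx_{n+1}) < d(x_n,x_{n+1}).
\]
Thus $\{d(x_n,x_{n+1})\}$ is strictly decreasing to some $L\ge 0$. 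Assuming $L>0$ and letting $t_n=\alpha(x_n,x_{n+1})\mathcal{H}(Tx_n,Tx_{n+1})$, $s_n=d(x_n,x_{n+1})$, both sequences tend to $L$ with $t_n<s_n$, so condition (b) of Definition \ref{CGSimulationfunction} forces $\limsup \zeta(t_n,s_n)<C_G$, contradicting \eqref{alphaAdmZGContraction}. Hence $L=0$.

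Next I show $\{x_n\}$ is Cauchy. Suppose not; then Lemma \ref{LemmaA} supplies subsequences $\{x_{m(k)}\}$, $\{x_{n(k)}\}$ satisfying the stated limit relations at some $\varepsilon>0$. Since $T$ is triangular $\alpha$-admissible, Lemma \ref{Lemma3a} yields $\alpha(x_{m(k)},x_{n(k)})\ge 1$. Applying \eqref{alphaAdmZGContraction} to the pair $(x_{m(k)},x_{n(k)})$ and using Definition \ref{CGproperty}(i), I obtain
\[
d(x_{m(k)+1},x_{n(k)+1})\le \alpha(x_{m(k)},x_{n(k)})\mathcal{H}(Tx_{m(k)},Tx_{n(k)}) < d(x_{m(k)},x_{n(k)}),
\]
which, combined with the limits from Lemma \ref{LemmaA}, shows that both $\alpha(x_{m(k)},x_{n(k)})\mathcal{H}(Tx_{m(k)},Tx_{n(k)})$ and $d(x_{m(k)},x_{n(k)})$ converge to $\varepsilon$ with the former strictly below the latter. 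Condition (b) of Definition \ref{CGSimulationfunction} again yields $\limsup \zeta(\cdot,\cdot)<C_G$, contradicting \eqref{alphaAdmZGContraction}. Therefore $\{x_n\}$ is Cauchy.

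Finally, since $\alpha(x_n,x_{n+1})\ge 1$, $\alpha$-completeness (condition (i)) produces $u\in X$ with $x_n\to u$. By $\alpha$-continuity (condition (iv)) we have $\mathcal{H}(Tx_n,Tu)\to 0$, so
\[
d(u,Tu)=\lim_{n\to\infty} d(x_{n+1},Tu)\le \lim_{n\to\infty}\mathcal{H}(Tx_n,Tu)=0,
\]
and because $Tu\in\mathcal{CB}(X)$ is closed, $u\in Tu$. The expected main obstacle is the Cauchy step: one must correctly locate the limits of the two subsequence-based numerical sequences so that condition (b) of Definition \ref{CGSimulationfunction} applies with the required strict inequality $t_k<s_k$; Lemma \ref{Lemma3a} and Lemma \ref{LemmaA} are precisely tailored for this, and the argument becomes cleaner than in Theorem \ref{TheoremAlpha1} because no case analysis on a max is needed.
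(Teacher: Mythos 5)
Your proof is correct and takes essentially the same route as the paper, whose entire proof of Theorem \ref{TheoremAlpha2} is the single line ``the proof follows in the same manner as in Theorem \ref{TheoremAlpha1}''; you have carried out exactly that adaptation, correctly observing that the case analysis on the maximum defining $M(x,y)$ disappears. The only caveat, inherited verbatim from the paper's own argument for Theorem \ref{TheoremAlpha1}, is the selection of $x_{n+1}\in Tx_n$ with $d(x_n,x_{n+1})\le \mathcal{H}(Tx_{n-1},Tx_n)$: since the infimum defining $d(x_n,Tx_n)$ need not be attained, a fully rigorous version would insert an $\varepsilon$-slack (or a factor $q>1$) here, but this is not a deviation from the paper.
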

\begin{proof}
The proof follows in the same manner as in Theorem \ref{TheoremAlpha1}.
\end{proof}

\begin{corollary} 
Let $(X, d)$ be a metric space, $T : X \to \mathcal{CB}(X)$ be a  $\alpha$-admissible generalized $\mathcal{Z}_G$-contractive (or, $\alpha$-admissible $\mathcal{Z}_G$-contractive) multivalued mapping. Suppose following conditions hold:
\begin{enumerate}
\item[(i)] $(X, d)$ is an $\alpha$-complete metric space,
\item[(ii)] there exist $x_0 \in X$ and $x_1\in Tx_0$ such that $\alpha(x_0,x_1) \geq 1$,
\item[(iii)] $T$ is a triangular $\alpha_*$-admissible,
\item[(iv)] $T$ is an $\alpha$-continuous multivalued mapping.
\end{enumerate}
Then $T$ has a fixed point.
\end{corollary}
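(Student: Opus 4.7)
The plan is to derive the corollary directly from Theorem \ref{TheoremAlpha1} (for the generalized contraction case) and Theorem \ref{TheoremAlpha2} (for the non-generalized case) by showing that the only hypothesis that differs, namely condition (iii), is in fact strengthened rather than weakened. Hypotheses (i), (ii), (iv) are verbatim the same as in the theorems, so there is nothing to check there. All the work is concentrated in the implication ``triangular $\alpha_*$-admissible $\Longrightarrow$ triangular $\alpha$-admissible,'' which is asserted in the paragraph following Definition 3.2 but can be made explicit as follows.

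First I would observe that $\alpha_*$-admissibility implies $\alpha$-admissibility: if $\alpha(x,y)\geq 1$, then $\alpha_*(Tx,Ty)=\inf\{\alpha(a,b):a\in Tx,\, b\in Ty\}\geq 1$, so in particular $\alpha(a,b)\geq 1$ for every $a\in Tx$ and every $b\in Ty$, which is exactly the pointwise condition in Definition 2.3. Next, I would verify that the triangular clause for $\alpha_*$-admissibility implies the triangular clause for $\alpha$-admissibility. Suppose $\alpha(x,y)\geq 1$ and $\alpha(y,w)\geq 1$ with $w\in Ty$. The first inequality together with $\alpha_*$-admissibility gives $\alpha_*(Tx,Ty)\geq 1$, so by the defining property of a triangular $\alpha_*$-admissible map we obtain $\alpha(x,z)\geq 1$ for every $z\in Ty$; taking $z=w$ yields the desired conclusion $\alpha(x,w)\geq 1$.

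Having established the implication, the proof of the corollary consists of a single invocation: $T$ triangular $\alpha_*$-admissible is in particular triangular $\alpha$-admissible, so all four hypotheses of Theorem \ref{TheoremAlpha1} are satisfied when $T$ is $\alpha$-admissible generalized $\mathcal{Z}_G$-contractive, and all four of Theorem \ref{TheoremAlpha2} are satisfied when $T$ is $\alpha$-admissible $\mathcal{Z}_G$-contractive. Either theorem then delivers a fixed point of $T$.

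There is no real obstacle here; the only point that requires attention is the careful comparison of the two triangular clauses, since the $\alpha_*$-version takes $\alpha_*(Tx,Ty)\geq 1$ as hypothesis while the $\alpha$-version takes only $\alpha(y,z)\geq 1$ for one particular $z\in Ty$, and one must make sure the stronger hypothesis is actually available in the situation where the weaker one is assumed. The argument above supplies it by passing through $\alpha_*$-admissibility itself.
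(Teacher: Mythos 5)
Your proposal is correct and matches the paper's (implicit) argument: the paper states this corollary without proof, relying on its remark that a triangular $\alpha_*$-admissible mapping is triangular $\alpha$-admissible, and then invoking Theorem \ref{TheoremAlpha1} or Theorem \ref{TheoremAlpha2} exactly as you do. Your explicit verification of that implication (both the admissibility clause via $\alpha_*(Tx,Ty)=\inf\{\alpha(a,b)\}\geq 1$ and the triangular clause) is sound and simply fills in the detail the paper leaves to the reader.
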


\begin{corollary} 
Let $(X, d)$ be a metric space, $T : X \to \mathcal{CB}(X)$ be a  $\alpha$-admissible generalized $\mathcal{Z}_G$-contractive (or, $\alpha$-admissible $\mathcal{Z}_G$-contractive)  multivalued mapping. Suppose following conditions hold:
\begin{enumerate}
\item[(i)] $(X, d)$ is complete metric space,
\item[(ii)] there exist $x_0 \in X$ and $x_1\in Tx_0$ such that $\alpha(x_0,x_1) \geq 1$,
\item[(iii)] $T$ is a triangular $\alpha_*$-admissible,
\item[(iv)] $T$ is a continuous multivalued mapping.
\end{enumerate}
Then $T$ has a fixed point.
\end{corollary}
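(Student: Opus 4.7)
The plan is to deduce this corollary directly from its predecessor (or equivalently, from Theorems \ref{TheoremAlpha1} and \ref{TheoremAlpha2}) by verifying that each hypothesis here is at least as strong as the corresponding $\alpha$-version. First, I would invoke Remark 2.2 (from \cite{HussainKutbiSalimi2014}), which records that every complete metric space is automatically $\alpha$-complete for any choice of $\alpha:X\times X\to[0,\infty)$; this converts hypothesis (i) of the present statement into hypothesis (i) of the preceding corollary. Second, I would invoke Remark 2.3, which states that continuity of a multivalued mapping $T$ forces $\alpha$-continuity for every $\alpha$; this turns hypothesis (iv) into its $\alpha$-weakened version.

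Hypotheses (ii) and (iii) are verbatim identical in both statements. Moreover, the remark following Definition 3.2 already notes that triangular $\alpha_*$-admissibility implies triangular $\alpha$-admissibility, so the argument also goes through if one chooses to invoke Theorems \ref{TheoremAlpha1} and \ref{TheoremAlpha2} directly rather than via the previous corollary. In either case, the existence of a fixed point follows immediately, in both the $\alpha$-admissible generalized $\mathcal{Z}_G$-contractive form and the $\alpha$-admissible $\mathcal{Z}_G$-contractive form.

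No genuine obstacle is anticipated: this corollary is nothing but a specialization of the preceding results, and the two reductions above --- complete implies $\alpha$-complete, continuous implies $\alpha$-continuous --- are already available in the preliminaries. The entire proof is essentially a one-line appeal to the previous corollary, so I expect no iteration of the Picard scheme, no recourse to Lemma \ref{LemmaA}, and no use of the $C_G$-simulation machinery beyond what has already been exploited in the proof of Theorem \ref{TheoremAlpha1}.
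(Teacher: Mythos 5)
Your proposal is correct and is exactly the (implicit) derivation the paper intends: the corollary is the specialization of Theorems \ref{TheoremAlpha1} and \ref{TheoremAlpha2} obtained via the three reductions you cite --- complete implies $\alpha$-complete, continuity implies $\alpha$-continuity, and triangular $\alpha_*$-admissibility implies triangular $\alpha$-admissibility. No further argument is needed.
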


\begin{example}\rm \label{Example1}
Let $X=(-10,10)$ with the metric $d(x,y)= |x-y|$ and  $T : X \to \mathcal{CB}(X)$ be defined as:
$$T(x) = \left\{\begin{array}{lll}
\{-2\} ~& \mbox{if} ~x \in (-10,0), \\&\\

\left[0,\frac{5x}{6}\right] ~& \mbox{if} ~x \in [0,2],\\&\\

\left[0, 2x-\frac{5}{3}\right] ~& \mbox{if} ~x \in (2,5],\\&\\

\{9\} ~& \mbox{if} ~x \in (5,10).
\end{array}\right.$$ 
Define $ \alpha: X\times X \to [0,\infty)$ by
$$\alpha(x,y) = \left\{\begin{array}{lll}
1 ~& \mbox{if} ~x,y \in [0,2], \\
&\\
0 ~&   \mbox{otherwise}.
\end{array}\right.$$
Then the space $(X,d)$ is $\alpha$-complete and $T$ is not continuous but it is $\alpha$-continuous. Also $T$ is an triangular $\alpha$-admissible multivalued mapping, since if $\alpha(x,y)\geq 1$, then we have $x,y \in [0,2]$, and so $Tx, Ty \subseteq \left[0,\frac{5}{3}\right]$, which implies $\alpha(p,q)\geq 1$ for all $p\in Tx$ and $q\in Ty$. Thus, $T$ is $\alpha$-admissible. Further, if $\alpha(x,y) \geq 1$ then $x,y \in [0,2]$. So $x\in [0,2]$ and $Ty \subseteq \left[0, \frac{5}{3}\right]$. Let $z\in Ty$. Then we have $\alpha(y,z) \geq 1$. Finally, $x\in [0,2]$ and $z\in \left[0, \frac{5}{3}\right]$ gives $\alpha(x,z)\geq 1$. Hence $T$ is triangular $\alpha$-admissible.

If we choose $x_0=2$ then condition (ii) of Theorem \ref{TheoremAlpha1} holds.
%
%
Consider $\zeta(t,s)=\frac{5}{6}s-t$ and $G(s,t)=s-t$, then $T$ is an $\alpha$-admissible generalized $\mathcal{Z}_G$-contractive multivalued mapping with $C_G=0$.
Thus, all the conditions of Theorem \ref{TheoremAlpha1} are satisfied. Consequently $T$ has fixed points in $X$.
\end{example}

The next results show that the $\alpha$-continuity or continuity of the mapping $T$ can be relaxed by assuming the condition $(iv')$ as follows.

\begin{theorem} \label{TheoremAlpha3}
Let $(X, d)$ be a metric space, $T : X \to \mathcal{CB}
(X)$ be a  $\alpha$-admissible generalized $\mathcal{Z}_G$-contractive multivalued mapping. Suppose following conditions hold:
\begin{enumerate}
\item[(i)] $(X, d)$ is an $\alpha$-complete metric space,
\item[(ii)] there exist $x_0 \in X$ and $x_1\in Tx_0$ such that $\alpha(x_0,x_1) \geq 1$,
\item[(iii)] $T$ is a triangular $\alpha$-admissible,
\item[(iv')] if $\{x_n\}$ is a sequence in $X$ such that $\alpha(x_n,x_{n+1}) \geq 1$ for all $n\in \mathbb{N}$ and $x_n \xrightarrow[]{d} x \in X$ as $n \to \infty$, then we have $\alpha(x_{n},x) \geq 1$ for all $n \in \mathbb{N}$.
\end{enumerate}
Then $T$ has a fixed point.
\end{theorem}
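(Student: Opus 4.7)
The plan is to follow the first part of the proof of Theorem \ref{TheoremAlpha1} verbatim: construct a sequence $\{x_n\}$ with $x_{n+1}\in Tx_n$ and $d(x_{n+1},x_{n+2})\leq\mathcal{H}(Tx_n,Tx_{n+1})$, invoke $\alpha$-admissibility to obtain $\alpha(x_n,x_{n+1})\geq 1$ for all $n$, argue that $\{d(x_n,x_{n+1})\}$ is strictly decreasing to $0$ using property $C_G$ and condition (b) of Definition \ref{CGSimulationfunction}, and then use Lemma \ref{Lemma3a} (triangular $\alpha$-admissibility) together with Lemma \ref{LemmaA} to show $\{x_n\}$ is Cauchy. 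By $\alpha$-completeness we obtain a limit $u\in X$ with $x_n\xrightarrow{d} u$. None of this uses $\alpha$-continuity, so everything transfers directly.

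The new input is hypothesis (iv'), which gives $\alpha(x_n,u)\geq 1$ for every $n$. The goal becomes $d(u,Tu)=0$, which together with closedness of $Tu\in\mathcal{CB}(X)$ yields $u\in Tu$. I would argue by contradiction: suppose $d(u,Tu)>0$. We may also assume $x_n\neq u$ for all large $n$ (otherwise $x_{n+1}\in Tu$ for infinitely many $n$ and closedness of $Tu$ finishes the proof). Apply the contractive inequality \eqref{AlphaAdmGeneralizedZG-contraction} with $x=x_n$, $y=u$; combining property (a) of Definition \ref{CGSimulationfunction} with (i) of Definition \ref{CGproperty} gives
\begin{equation*}
d(x_{n+1},Tu)\leq \mathcal{H}(Tx_n,Tu)\leq \alpha(x_n,u)\mathcal{H}(Tx_n,Tu) < M(x_n,u).
\end{equation*}

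The main technical step is to identify both limits. Using $d(x_n,u)\to 0$, $d(x_n,Tx_n)\leq d(x_n,x_{n+1})\to 0$, $d(u,Tx_n)\leq d(u,x_{n+1})\to 0$, and continuity of $y\mapsto d(y,Tu)$ (so $d(x_n,Tu)\to d(u,Tu)$), the four terms in $M(x_n,u)$ converge to $0$, $0$, $d(u,Tu)$, and $d(u,Tu)/2$ respectively, hence $M(x_n,u)\to d(u,Tu)$. Squeezing $d(u,Tu)-d(u,x_{n+1})\leq d(x_{n+1},Tu)\leq \alpha(x_n,u)\mathcal{H}(Tx_n,Tu)< M(x_n,u)$ then forces $\alpha(x_n,u)\mathcal{H}(Tx_n,Tu)\to d(u,Tu)$ as well.

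The hard part, and the reason (iv') is exactly the hypothesis needed, is that once the two sequences $t_n:=\alpha(x_n,u)\mathcal{H}(Tx_n,Tu)$ and $s_n:=M(x_n,u)$ are both shown to converge to the common positive limit $d(u,Tu)$ with $t_n<s_n$, condition (b) of Definition \ref{CGSimulationfunction} gives $\limsup_{n\to\infty}\zeta(t_n,s_n)<C_G$, whereas \eqref{AlphaAdmGeneralizedZG-contraction} requires $\zeta(t_n,s_n)\geq C_G$ for every $n$, a contradiction. Therefore $d(u,Tu)=0$, and closedness of $Tu$ delivers $u\in Tu$. The proof of the $\mathcal{Z}_G$-contractive analogue (corresponding to Theorem \ref{TheoremAlpha2}) is obtained by the same argument with $M(x,y)$ replaced by $d(x,y)$, so the subsequent statement can be declared analogous without separate treatment.
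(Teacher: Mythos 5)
Your proposal is correct and follows essentially the same route as the paper's proof: reuse the Cauchy-sequence construction from Theorem \ref{TheoremAlpha1}, invoke (iv$'$) to get $\alpha(x_n,u)\geq 1$, and assume $d(u,Tu)>0$ to show that $M(x_n,u)$ equals (the paper) or tends to (your version) $d(u,Tu)$, so that the contractive inequality forces $\alpha(x_n,u)\mathcal{H}(Tx_n,Tu)<M(x_n,u)$. The only divergence is the final contradiction: the paper lets $n\to\infty$ in the strict inequality $d(x_{n+1},Tu)<d(u,Tu)$ and claims $d(u,Tu)<d(u,Tu)$, whereas you identify the common positive limit of both arguments of $\zeta$ and apply axiom (b) of Definition \ref{CGSimulationfunction} to get $\limsup_n\zeta(t_n,s_n)<C_G$; your ending is the more careful one (limits preserve only non-strict inequalities), and you also dispose of the case $x_n=u$, which the paper passes over silently.
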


\begin{proof}
Following the proof of Theorem \ref{TheoremAlpha1}, we know that $\{x_n\}$ is a Cauchy sequence in $X$ such that $x_n \xrightarrow[]{d} x\in X$ as $n \to \infty$ and $\alpha(x_n, x_{n+1})\geq 1$ for all $n \in \mathbb{N}$. 

From condition $(iv')$, we get $\alpha(x_n,u)\geq 1$ for all $n \in \mathbb{N}$. By using \eqref{AlphaAdmGeneralizedZG-contraction}, we have
\begin{equation}\label{Alpha-Eq-4b}
 \zeta\big(\alpha(x_n, u) \mathcal{H}(Tx_n, Tu), M(x_n, u)\big)\geq C_G
\end{equation}
where
\begin{equation*}
M(x_n, u)=\max\left\lbrace d(x_n,u), d(x,Tx_n), d(u,Tu), \frac{d(x_n,Tu)+d(u,Tx_n)}{2} \right\rbrace.
\end{equation*}
for all $n \in \mathbb{N}$. Suppose that $d(u,Tu)>0$. Let $\varepsilon = \frac{d(u,Tu)}{2}$. Since $x_n \xrightarrow[]{d} x\in X$ as $n \to \infty$, we can find $n_1\in \mathbb{N}$ such that 
\begin{equation}  \label{Alpha-Eq-4c1}
d(u,x_n) < \frac{d(u,Tu)}{2}
\end{equation}
for all $n\geq n_1$. Furthermore, we obtain that
\begin{equation}  \label{Alpha-Eq-4c2}
d(u,Tx_n) \leq d(u, x_{n+1})< \frac{d(u,Tu)}{2}
\end{equation}
for all $n\geq n_1$. Also, as $\{x_n\}$ is a Cauchy sequence, there exists $n_2 \in \mathbb{N}$ such that 
\begin{equation}  \label{Alpha-Eq-4c3}
d(x_n,Tx_n) \leq d(x_n,x_{n+1})< \frac{d(u,Tu)}{2}
\end{equation}
for all $n\geq n_2$. It follows from $d(x_n,Tu) \to d(u,Tu)$ as $n \to \infty$ that we can find $n_3 \in \mathbb{N}$ such that 
\begin{equation}   \label{Alpha-Eq-4c4}
d(x_n, Tu) < \frac{3d(u,Tu)}{2}
\end{equation}
for all $n\geq n_3$. Thus, using \eqref{Alpha-Eq-4c1}-\eqref{Alpha-Eq-4c4}, we get
\begin{equation} \label{Alpha-Eq-4c5}
M(x_n, u)=d(u,Tu)
\end{equation}
for all $n\geq n_0=\max\{n_1,n_2,n_3\}$. Hence from \eqref{Alpha-Eq-4b}, we have
\begin{equation*}\label{Alpha-Eq-4d}
C_G  \leq \zeta\Big(\alpha(x_n, u) \mathcal{H}(Tx_n, Tu), d(u,Tu)\Big) < G\Big(d(u,Tu), \alpha(x_n, u) \mathcal{H}(Tx_n, Tu)\Big). 
\end{equation*}
Using (i) of Definition \ref{CGproperty}, we get $$\alpha(x_n, u) \mathcal{H}(Tx_n, Tu)< d(u,Tu).$$ Further, since
\begin{equation} \label{Alpha-Eq-4d}
d(x_{n+1},Tu) \leq \mathcal{H}(Tx_n, Tu) \leq \alpha(x_n, u) \mathcal{H}(Tx_n, Tu)< d(u,Tu),
\end{equation}
letting $n \to \infty$, we get $d(u,Tu)<d(u,Tu)$, which is a contradiction. Therefore, $d(u,Tu)=0$, that is,  $u\in Tu$. This completes the proof.
\end{proof}

\begin{corollary} 
Let $(X, d)$ be a metric space, $T : X \to \mathcal{CB}(X)$ be a  $\alpha$-admissible generalized $\mathcal{Z}_G$-contractive (or, $\alpha$-admissible $\mathcal{Z}_G$-contractive) multivalued mapping. Suppose following conditions hold:
\begin{enumerate}
\item[(i)] $(X, d)$ is an $\alpha$-complete metric space,
\item[(ii)] there exist $x_0 \in X$ and $x_1\in Tx_0$ such that $\alpha(x_0,x_1) \geq 1$,
\item[(iii)] $T$ is a triangular $\alpha_*$-admissible,
\item[(iv')] if $\{x_n\}$ is a sequence in $X$ such that $\alpha(x_n,x_{n+1}) \geq 1$ for all $n\in \mathbb{N}$ and $x_n \xrightarrow[]{d} x \in X$ as $n \to \infty$, then we have $\alpha(x_{n},x) \geq 1$ for all $n \in \mathbb{N}$.
\end{enumerate}
Then $T$ has a fixed point.
\end{corollary}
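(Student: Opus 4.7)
The plan is to deduce the corollary directly from Theorem \ref{TheoremAlpha3} for the generalized case, together with its obvious analog for the plain $\mathcal{Z}_G$-contractive case. The only discrepancy with Theorem \ref{TheoremAlpha3} is hypothesis (iii): triangular $\alpha_*$-admissibility here versus triangular $\alpha$-admissibility there. Since all other hypotheses coincide, it suffices to verify the implication ``triangular $\alpha_*$-admissible $\Longrightarrow$ triangular $\alpha$-admissible'', which is already asserted without proof in the paragraph following Definition 3.2.

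To check this implication, I would first invoke the remark in Section 2 stating that $\alpha_*$-admissibility entails $\alpha$-admissibility in the sense of Mohammadi \textit{et al.}: if $x \in X$, $y \in Tx$ and $\alpha(x,y) \geq 1$, then $\alpha_*(Tx,Ty) \geq 1$ trivially forces $\alpha(y,z) \geq 1$ for every $z \in Ty$. For the triangular part, suppose $\alpha(x,y) \geq 1$ and $\alpha(y,z) \geq 1$ for some $z \in Ty$. Since $\alpha(x,y) \geq 1$, $\alpha_*$-admissibility gives $\alpha_*(Tx, Ty) \geq 1$; then the triangular $\alpha_*$ hypothesis produces $\alpha(x, w) \geq 1$ for every $w \in Ty$, and in particular $\alpha(x,z) \geq 1$. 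This establishes triangular $\alpha$-admissibility, so Theorem \ref{TheoremAlpha3} applies and delivers a fixed point of $T$ in the generalized $\mathcal{Z}_G$-contractive case.

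For the $\alpha$-admissible $\mathcal{Z}_G$-contractive case, I would simply replay the proof of Theorem \ref{TheoremAlpha3} with $M(x,y)$ replaced by $d(x,y)$ throughout: property $C_G$ combined with condition (a) of Definition \ref{CGSimulationfunction} still produces a strictly decreasing iterate $d(x_{n+1},x_{n+2}) \leq \alpha(x_n,x_{n+1})\mathcal{H}(Tx_n,Tx_{n+1}) < d(x_n, x_{n+1})$, the Cauchy argument via Lemma \ref{LemmaA} carries over verbatim, and the terminal step under hypothesis (iv') is in fact slightly easier since one does not need the auxiliary estimates \eqref{Alpha-Eq-4c1}--\eqref{Alpha-Eq-4c4} that served to identify $M(x_n,u)$ with $d(u,Tu)$. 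The main, and really quite minor, obstacle is the definition-level reduction of triangular $\alpha_*$ to triangular $\alpha$ above; once that is settled the corollary is an immediate application of Theorem \ref{TheoremAlpha3} and its straightforward analog.
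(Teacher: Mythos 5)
Your proposal is correct and follows exactly the route the paper intends: the corollary is stated without proof as an immediate consequence of Theorem \ref{TheoremAlpha3}, using the fact (asserted after Definition 3.2) that a triangular $\alpha_*$-admissible mapping is triangular $\alpha$-admissible. Your verification of that implication is sound, and your remark that the plain $\mathcal{Z}_G$-contractive case is handled by rerunning the proof with $M(x,y)$ replaced by $d(x,y)$ matches how the paper treats the parallel statements.
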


\begin{corollary} 
Let $(X, d)$ be a metric space, $T : X \to \mathcal{CB}(X)$ be a  $\alpha$-admissible generalized $\mathcal{Z}_G$-contractive (or, $\alpha$-admissible $\mathcal{Z}_G$-contractive)  multivalued mapping. Suppose following conditions hold:
\begin{enumerate}
\item[(i)] $(X, d)$ is complete metric space,
\item[(ii)] there exist $x_0 \in X$ and $x_1\in Tx_0$ such that $\alpha(x_0,x_1) \geq 1$,
\item[(iii)] $T$ is a triangular $\alpha_*$-admissible,
\item[(iv')] if $\{x_n\}$ is a sequence in $X$ such that $\alpha(x_n,x_{n+1}) \geq 1$ for all $n\in \mathbb{N}$ and $x_n \xrightarrow[]{d} x \in X$ as $n \to \infty$, then we have $\alpha(x_{n},x) \geq 1$ for all $n \in \mathbb{N}$.
\end{enumerate}
Then $T$ has a fixed point.
\end{corollary}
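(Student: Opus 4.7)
The plan is to deduce this corollary directly from Theorem \ref{TheoremAlpha3} (and its $\mathcal{Z}_G$-contractive analogue, which follows from Theorem \ref{TheoremAlpha2} by the same argument as Theorem \ref{TheoremAlpha1}) by checking that each hypothesis here is strictly stronger than the corresponding hypothesis there. Since the structural work has already been carried out, the argument reduces to quoting two earlier observations.

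First, condition (i) asserts completeness of $(X,d)$, and by the remark following Definition 2.5, every complete metric space is $\alpha$-complete with respect to any $\alpha$. Hence hypothesis (i) of Theorem \ref{TheoremAlpha3} is automatically met. Second, condition (iii) asks for triangular $\alpha_*$-admissibility. The unlabeled remark placed just after Definition 3.2 records that every triangular $\alpha_*$-admissible mapping is triangular $\alpha$-admissible; this in turn is built upon the fact (Remark 2.1) that $\alpha_*$-admissibility implies $\alpha$-admissibility in the sense of \cite{MohammadiRezapourNaseer2013}. Therefore hypothesis (iii) of Theorem \ref{TheoremAlpha3} holds as well. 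Conditions (ii) and (iv$'$) are identical to the corresponding hypotheses in Theorem \ref{TheoremAlpha3}, and the contractive hypothesis on $T$ is the same.

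With these reductions in place, I would apply Theorem \ref{TheoremAlpha3} to obtain the existence of a fixed point in the $\alpha$-admissible generalized $\mathcal{Z}_G$-contractive case. For the parenthetical alternative (the $\alpha$-admissible $\mathcal{Z}_G$-contractive case), the same reductions yield the hypotheses of Theorem \ref{TheoremAlpha2} after one notes that the ``condition (iv$'$)'' variant of that result is obtained by repeating the argument of Theorem \ref{TheoremAlpha3} verbatim (replacing $M(x,y)$ by $d(x,y)$ throughout), exactly as the paper stipulates that the proofs of Theorems \ref{TheoremAlpha1} and \ref{TheoremAlpha2} are parallel.

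There is essentially no obstacle here: the corollary is a specialization, not a generalization, of Theorem \ref{TheoremAlpha3}. The only point requiring a modicum of care is ensuring that the implication ``triangular $\alpha_*$-admissible $\Rightarrow$ triangular $\alpha$-admissible'' is in fact justified by the earlier text and not merely claimed. If in the writeup one wants to be fully self-contained, I would insert a one-line verification: if $\alpha(x,y)\geq 1$ and $\alpha(y,z)\geq 1$ for $z\in Ty$, then because $z\in Ty$ one has $\alpha_*(Tx,Ty)\leq \alpha(\cdot,z)$ at the relevant point; invoking $\alpha_*$-admissibility of $T$ from $\alpha(x,y)\geq 1$ gives $\alpha_*(Tx,Ty)\geq 1$, and the triangular $\alpha_*$ clause then yields $\alpha(x,z)\geq 1$, as required. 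With that, the reduction is complete and the theorem applies.
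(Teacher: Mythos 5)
Your reduction is exactly the intended one: the paper states this corollary without proof as an immediate specialization of Theorem \ref{TheoremAlpha3}, using that completeness implies $\alpha$-completeness (Remark 2.2) and that triangular $\alpha_*$-admissibility implies triangular $\alpha$-admissibility (the remark after Definition 3.2). Your added verification of the latter implication is correct in substance (invoke $\alpha_*$-admissibility to get $\alpha_*(Tx,Ty)\geq 1$ from $\alpha(x,y)\geq 1$, then apply the triangular $\alpha_*$ clause), so the proposal matches the paper's approach.
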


\begin{example}\rm \label{Example2}
Let $X=(0,1]$ with the metric $d(x,y)= |x-y|$ and  $T : X \to \mathcal{CB}(X)$ be defined as:
	$$Tx = \left\{\begin{array}{lll}
	\{\frac{1}{10}\} ~& \mbox{if} ~x \in (0,\frac{1}{2}), \\&\\
	
	\{\frac{3}{5}, \frac{3}{4}\}  ~& \mbox{if} ~x \in [\frac{1}{2},\frac{3}{4}],\\&\\
	
	\{\frac{4}{5}\} ~& \mbox{if} ~x \in (\frac{3}{4},1].
	\end{array}\right.$$ 
	Define $ \alpha: X\times X \to [0,\infty)$ by
	$$\alpha(x,y) = \left\{\begin{array}{lll}
	1 ~& \mbox{if} ~x,y \in [\frac{1}{2},1], \\
	&\\
	0 ~&   \mbox{otherwise}.
	\end{array}\right.$$
Then $(X,d)$ is $\alpha$-complete metric space. The mapping $T$ is not $\alpha$-continuous (to see this consider $x_n= \frac{3}{4}+\frac{1}{n}$, $x= \frac{3}{4}$). Also, $T$ is triangular $\alpha$-admissible. Further, there exists $x_0= \frac{1}{2}$, and $x_1= \frac{3}{4} \in Tx_0= \{\frac{3}{5}, \frac{3}{4}\}$ such that $\alpha(x_0,x_1)\geq 1$.
	
Now, let $\{x_n\}$ be any sequence in $X$ such that $\alpha(x_n,x_{n+1}) \geq 1$ for all $n \in \mathbb{N}$ and $x_n \to x$ as $ n \to \infty$. This implies $ x_n \in \left[\frac{1}{2},1\right]$ for every $n \in \mathbb{N}$ and hence $x \in \left[\frac{1}{2},1\right]$. Thus $\alpha(x_n,x) \geq 1$ and so condition $(iv')$ is satisfied. One can easily verify that $T$ is an $\alpha$-admissible $\mathcal{Z}_G$-contractive multivalued mapping by taking $\zeta(t,s)=\frac{5}{6}s-t$, $G(s,t)=s-t$ and $C_G=0$. Thus, $T$ satisfies all the conditions of Theorem \ref{TheoremAlpha3} having some fixed points.
\end{example}

\section{Consequences}

In 2008, Jachymski \cite{Jachymski2008} using the language of graph theory (which subsumes the partial ordering) introduced the concept of G-contraction on a metric space endowed with a graph and proved a fixed point theorem which extends the results of Ran and Reurings \cite{RanReurings2004}. Afterwards, some results of Jachymski \cite{Jachymski2008} have been extended to multivalued mappings in \cite{DinevariFrigon2013,BegButt2013,ChifuPetrusel2012,NicolaeOReganPetrusel2011}.

In this section, we give fixed point results on a metric space endowed with a graph.
Before presenting our results, we give the following notions and definitions.

Let $(X,d)$ be a metric space and $\Delta = \{(x,x) : x \in X\}$. Consider a graph $G$ with the set $V(G)$ of its vertices equal to $X$ and the set $E(G)$ of its edges as a superset of $\Delta$. Assume that $G$ has no parallel edges, that is $(x,y), (y,x) \in E(G)$ implies $x=y$. Also, $G$ is directed if the edges have a direction associated with them. Now we can identify the graph $G$ with the pair $\left(V(G), E(G)\right)$. Moreover, we may treat G as a weighted graph by assigning to each edge
the distance between its vertices.

\begin{definition} \rm 
Let $X$ be a nonempty set endowed with a graph $G$ and $T: X\to \mathcal{N}(X)$ be a multivalued mapping. Then $T$ is said to be triangular edge preserving if for each $x\in X$ and $y\in Tx$ with $(x,y), (y,z) \in E(G)$, we have $(x,z)\in E(G)$ for all $z\in Ty$.
\end{definition}

\begin{definition} \rm 
Let $(X, d)$ be a metric space endowed with a graph $G$. The metric space $X$
is said to be $E(G)$-complete if and only if every Cauchy sequence $\{x_n\}$ in $X$ with $(x_n, x_{n+1}) \in E(G)$ for all $n \in \mathbb{N}$, converges in $X$.
\end{definition}

\begin{definition} \rm 
Let $(X, d)$ be a metric space endowed with a graph $G$. We say that $T : X \to \mathcal{CB}(X)$ is an $E(G)$-continuous mapping to $(\mathcal{CB}(X), \mathcal{H})$ if for given $x\in X$ and sequence $\{x_n\}$ with 
$$\lim\limits_{n\to \infty} d(x_n,x)=0~~~\text{and}~~(x_n,x_{n+1})\in E(G)~~\text{for all}~n \in \mathbb{N}~\Longrightarrow ~~\lim\limits_{n \to \infty}\mathcal{H}(Tx_n, Tx) =0.$$
\end{definition}

\begin{definition} \rm 
Let $(X, d)$ be a metric space endowed with a graph $G$. A mapping $T: X\to \mathcal{CB}(X)$ is said to be a $E(G)$-$\mathcal{Z}_G$-contractive  mapping if there exist $\zeta\in \mathcal{Z}_G$ and $\alpha : X \times X \to  [0,\infty)$ such that
\begin{equation}\label{alphaAdmZGContraction}
x,y\in X, ~~(x,y)\in E(G) ~~ \Longrightarrow~~ \zeta\Big(\alpha(x, y) \mathcal{H}(Tx, Ty), d(x, y)\Big)\geq C_G.
\end{equation}
Similarly, by taking $M(x,y)$ instead of $d(x,y)$, we can define the generalized $E(G)$-$\mathcal{Z}_G$-contractive  mapping.
\end{definition}

\begin{theorem} 	
Let $(X, d)$ be a metric space endowed with a graph $G$, and $T : X \to \mathcal{CB}(X)$ be an $E(G)$-$\mathcal{Z}_G$-contractive mapping. Suppose following conditions hold:
\begin{enumerate}
\item[(i)] $(X, d)$ is an $E(G)$-complete metric space,
\item[(ii)] there exist $x_0 \in X$ and $x_1\in Tx_0$ such that $(x_0,x_1) \in E(G)$,
\item[(iii)] $T$ is triangular edge preserving,
\item[(iv)] $T$ is an $E(G)$-continuous multivalued mapping.
\end{enumerate}
Then $T$ has a fixed point.
\end{theorem}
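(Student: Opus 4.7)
My plan is to deduce this theorem from Theorem \ref{TheoremAlpha2} by defining a bridge function $\alpha : X \times X \to [0,\infty)$ via
$$\alpha(x,y) = \begin{cases} 1, & (x,y) \in E(G), \\ 0, & \text{otherwise}. \end{cases}$$
Under this identification, every $E(G)$-flavored hypothesis should translate transparently into its $\alpha$-admissible counterpart, and the conclusion of Theorem \ref{TheoremAlpha2} will give the desired fixed point.

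First I would check the translations one by one. The $E(G)$-completeness of $(X,d)$ becomes $\alpha$-completeness, since a Cauchy sequence $\{x_n\}$ satisfies $\alpha(x_n,x_{n+1}) \geq 1$ for all $n$ precisely when $(x_n,x_{n+1}) \in E(G)$ for all $n$. Hypothesis (ii) transfers verbatim because $(x_0,x_1) \in E(G)$ is equivalent to $\alpha(x_0,x_1)=1 \geq 1$. For (iii), the edge-preserving part supplies $\alpha$-admissibility of $T$ (if $y \in Tx$ and $(x,y) \in E(G)$, then each $z \in Ty$ satisfies $(y,z) \in E(G)$, hence $\alpha(y,z) \geq 1$), while the triangular clause of Definition~3.1 of the graph setting directly gives the triangular condition $\alpha(x,y) \geq 1$ and $\alpha(y,z) \geq 1$ with $z \in Ty$ implies $\alpha(x,z) \geq 1$. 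Finally, $E(G)$-continuity of $T$ translates into $\alpha$-continuity since the two definitions agree under the substitution $(x_n,x_{n+1}) \in E(G) \Longleftrightarrow \alpha(x_n,x_{n+1}) \geq 1$.

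The only genuine subtlety is the contractive inequality. The $E(G)$-$\mathcal{Z}_G$-contractive hypothesis only supplies $\zeta\bigl(\alpha(x,y)\mathcal{H}(Tx,Ty), d(x,y)\bigr) \geq C_G$ for those pairs $(x,y) \in E(G)$, whereas the definition used in Theorem \ref{TheoremAlpha2} asks the inequality for all $x \neq y$. I would resolve this by inspecting the proof of Theorem \ref{TheoremAlpha1}/\ref{TheoremAlpha2}: the contractive inequality is invoked only on consecutive iterates $(x_n,x_{n+1})$ and on the Cauchy-subsequence pairs $(x_{m(k)}, x_{n(k)})$, and Lemma \ref{Lemma3a} plus the $\alpha$-admissibility force $\alpha \geq 1$ on every such pair. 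Translated back to the graph setting these pairs all lie in $E(G)$, so the $E(G)$-$\mathcal{Z}_G$-contractive hypothesis supplies exactly what the proof needs. This is what I expect to be the main obstacle: making this ``only on edges'' observation airtight, either by citing the proof of Theorem \ref{TheoremAlpha1} and noting which specific inequalities are used, or by rerunning the Picard iteration argument in purely graph-theoretic language.

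Having verified each hypothesis and the contractive inequality along the iterates, Theorem \ref{TheoremAlpha2} then produces a fixed point $u \in Tu$, completing the proof. Analogous corollaries for the triangular $\alpha_*$-admissible case, the complete (instead of $E(G)$-complete) case, and the relaxed continuity assumption $(iv')$ would follow by the same reduction applied to Theorem \ref{TheoremAlpha3} and its corollaries.
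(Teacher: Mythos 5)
Your proposal is exactly the paper's proof: the author defines the same indicator function $\alpha$ (equal to $1$ on $E(G)$ and $0$ otherwise) and invokes Theorem \ref{TheoremAlpha2} in a single sentence. Your extra observation — that the contractive inequality in the $E(G)$-setting is only supplied on edges, while the definition used in Theorem \ref{TheoremAlpha2} nominally demands it for all $x \neq y$, and that this is harmless because the proof of Theorem \ref{TheoremAlpha1}/\ref{TheoremAlpha2} only ever invokes the inequality on pairs with $\alpha \geq 1$ — is a genuine detail the paper glosses over, and spelling it out makes the reduction airtight.
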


\begin{proof}
This result can be obtained from Theorem \ref{TheoremAlpha2} by defining a mapping $\alpha : X \times X \to [0,\infty)$ such that
$$\alpha(x,y) = \left\{\begin{array}{lll}
1 ~& \mbox{if} ~(x,y) \in E(G), \\
&\\
0 ~&   \mbox{otherwise}.
\end{array}\right.$$
This completes the proof.
\end{proof}
By using Theorem \ref{TheoremAlpha3}, we get the following result.

\begin{theorem} 
Let $(X, d)$ be a metric space endowed with a graph $G$, and $T : X \to \mathcal{CB}(X)$ be an $E(G)$-$\mathcal{Z}_G$-contractive mapping. Suppose following conditions hold:
\begin{enumerate}
\item[(i)] $(X, d)$ is an $E(G)$-complete metric space,
\item[(ii)] there exist $x_0 \in X$ and $x_1\in Tx_0$ such that $(x_0,x_1) \in E(G)$,
\item[(iii)] $T$ is triangular edge preserving,
\item[(iv')] if $\{x_n\}$ is a sequence in $X$ such that $(x_n,x_{n+1}) \in E(G)$ for all $n\in \mathbb{N}$ and $x_n \rightarrow x \in X$ as $n \to \infty$, then we have $(x_{n},x) \in E(G)$ for all $n \in \mathbb{N}$.
\end{enumerate}
Then $T$ has a fixed point.
\end{theorem}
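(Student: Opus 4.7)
The plan is to reduce this result to the $\alpha$-admissible framework via the same indicator device the author uses just after Theorem \ref{TheoremAlpha2}. Specifically, I would set
\begin{equation*}
\alpha(x,y) = \begin{cases} 1 & \text{if } (x,y) \in E(G), \\ 0 & \text{otherwise,} \end{cases}
\end{equation*}
and then invoke the $\mathcal{Z}_G$-contractive version of Theorem \ref{TheoremAlpha3} (which follows from the same argument as Theorem \ref{TheoremAlpha3} with $d(x,y)$ in place of $M(x,y)$ throughout), after verifying that each graph-theoretic hypothesis translates into its $\alpha$-admissibility counterpart.

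The first four verifications are immediate from the definition of $\alpha$. $E(G)$-completeness becomes $\alpha$-completeness since $\alpha(x_n, x_{n+1}) \geq 1$ is by construction equivalent to $(x_n, x_{n+1}) \in E(G)$; the seed pair $(x_0, x_1) \in E(G)$ furnishes $\alpha(x_0, x_1) \geq 1$, giving hypothesis (ii); the triangular edge preserving property reads off as triangular $\alpha$-admissibility of $T$ directly from the definitions, giving (iii); and the graph-theoretic condition $(iv')$ transports verbatim to its $\alpha$-admissibility form.

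The only step requiring a genuine observation, and the main obstacle in this plan, is the contraction translation. The definition of $E(G)$-$\mathcal{Z}_G$-contractive mapping only delivers
\begin{equation*}
\zeta(\alpha(x,y)\mathcal{H}(Tx,Ty), d(x,y)) \geq C_G
\end{equation*}
on pairs $(x,y) \in E(G)$, whereas the $\alpha$-admissible $\mathcal{Z}_G$-contractive form formally asks for this on every $x \neq y$. The hard part is to observe that this apparent mismatch is harmless: a careful reading of the proof of Theorem \ref{TheoremAlpha3} shows that the contraction is invoked only at the consecutive iterates $(x_n, x_{n+1})$, at the Cauchy-failure subsequence pairs $(x_{m(k)}, x_{n(k)})$ produced by Lemma \ref{LemmaA}, and at the limit pairs $(x_n, u)$ supplied by $(iv')$. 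By Lemma \ref{Lemma3a} together with the triangular edge preserving property, each of these pairs already lies in $E(G)$, so the graph-theoretic contraction is available precisely where the proof requires it. Applying the conclusion of Theorem \ref{TheoremAlpha3} (equivalently its $\mathcal{Z}_G$-contractive analog) then yields a $u \in X$ with $u \in Tu$.
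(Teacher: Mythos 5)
Your proposal is correct and follows essentially the same route as the paper, which disposes of this theorem in one line by defining the indicator function $\alpha(x,y)=1$ iff $(x,y)\in E(G)$ and invoking Theorem \ref{TheoremAlpha3}. In fact you are more careful than the paper: the observation that the contraction inequality is only available on $E(G)$-pairs, and that the proof of Theorem \ref{TheoremAlpha3} only ever invokes it at the consecutive iterates, the Lemma \ref{LemmaA} subsequence pairs, and the pairs $(x_n,u)$ --- all of which lie in $E(G)$ by Lemma \ref{Lemma3a} and condition $(iv')$ --- is a genuine gap in the formal reduction that the paper leaves unaddressed, and your proposal closes it.
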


\end{document}